\newcommand{\R}{{\mathbb{R}}}
\newcommand{\E}{{\mathbb{E}}}
\newcommand{\N}{{\mathbb{N}}}
\newcommand{\F}{{\mathcal{F}}} % for the filtration
\renewcommand{\P}{{\mathbb{P}}} % for the probability measure
\newcommand{\diff}[1]{\,\mathrm{d}#1}
\newcommand{\ee}{\mathrm{e}}
\newcommand{\triple}{{\vert\kern-0.25ex\vert\kern-0.25ex\vert}}
\theoremstyle{plain}
\newtheorem{definition}{Definition}
\newtheorem{theorem}[definition]{Theorem}
\newtheorem{lemma}[definition]{Lemma}
\newtheorem{prop}[definition]{Proposition}
\theoremstyle{definition}
\newtheorem{remark}[definition]{Remark}
\begin{document}

\title[A discrete stochastic Gronwall Lemma]
{A discrete stochastic Gronwall Lemma}

\author[R.~Kruse]{Raphael Kruse}
\address{Raphael Kruse\\
Technische Universit\"at Berlin\\
Institut f\"ur Mathematik, Secr. MA 5-3\\
Stra\ss e des 17.~Juni 136\\
DE-10623 Berlin\\
Germany}
\email{kruse@math.tu-berlin.de}

\author[M.~Scheutzow]{Michael Scheutzow}
\address{Michael Scheutzow\\
Technische Universit\"at Berlin\\
Institut f\"ur Mathematik, Secr. MA 7-5\\
Stra\ss e des 17.~Juni 136\\
DE-10623 Berlin\\
Germany}
\email{ms@math.tu-berlin.de}

\keywords{discrete stochastic Gronwall Lemma, martingale inequality, a priori
estimate, backward Euler-Maruyama method}
\subjclass[2010]{Primary: 60G46 Secondary: 26D15 60G42 65C30} 

\begin{abstract}
  We derive a discrete version of the stochastic Gronwall Lemma
  found in [Scheutzow, IDAQP, 2013]. The proof is based on a corresponding
  deterministic version of the discrete Gronwall Lemma and an inequality
  bounding the supremum in terms of the infimum for time discrete martingales.
  As an application the proof of an a priori estimate for the backward
  Euler-Maruyama method is included.
\end{abstract}

\maketitle
%\tableofcontents

\section{Introduction}
\label{sec:intro}

The Gronwall Lemma is an often used tool in classical analysis for deriving a
priori and stability estimates of solutions to differential equations. It is
named after T.~H.~Gr\"onwall and originated in its differential form from his
work \cite{gronwall1919}. Besides the integral version in \cite{bellman1943}
many more variations of the Gronwall Lemma have been introduced with a wide
area of applications, for example, in ordinary differential equations, partial
differential equations, integral equations, and stochastic analysis.
Similarly, discrete versions of the Gronwall Lemma are often applied in order
to estimate the growth of solutions to time discrete difference equations, such
as numerical approximations of differential equations. For instance, we refer
to \cite{emmrich1999} and the references therein. The purpose of this 
paper is the derivation of the following time discrete version of the stochastic
Gronwall Lemma from \cite{scheutzow2013}:

\begin{theorem}
  \label{th:gronwall}
  Let $(M_n)_{n \in \N_0}$ be an $(\F_n)_{n \in
  \N_0}$-martingale satisfying $M_0 = 0$ on a filtered probability space
  $(\Omega, \F, (\F_n)_{n \in \N_0}, \P)$. Let $(X_n)_{n \in \N_0}$, $(F_n)_{n
  \in \N_0}$, and $(G_n)_{n \in \N_0}$ be sequences of nonnegative and adapted
  random variables with $\E[X_0] < \infty$ such that 
  \begin{align}
    \label{eq:cond}
    X_n \le F_n + M_n + \sum_{k = 0}^{n-1} G_k X_k, \quad \text{ for all } n
    \in \N_0.
  \end{align}
  Then, for any $p \in (0,1)$ and $\mu, \nu \in [1,\infty]$ with
  $\frac{1}{\mu} + \frac{1}{\nu} = 1$ and $p \nu < 1$, it holds true that
  \begin{align}
    \label{eq:gron1}
    \E \big[ \sup_{0 \le k \le n} X_k^p \big] \le 
    \Big( 1 + \frac{1}{1 - \nu p} \Big)^{\frac{1}{\nu}}
    \Big\| \prod_{k = 0}^{n-1} (1 + G_k)^p \Big\|_{L^{\mu}(\Omega)}
    \big( \E \big[ \sup_{0 \le k \le n} F_k \big] \big)^{p}
  \end{align}
  for all $n \in \N_0$. In particular, if $(G_n)_{n \in \N_0}$ is a
  deterministic sequence of nonnegative real numbers, then for any $p \in
  (0,1)$ it holds true that 
  \begin{align}
    \label{eq:gron2}
    \E \big[ \sup_{0 \le k \le n} X_k^p \big] \le 
    \Big( 1 + \frac{1}{1 - p} \Big)
    \Big( \prod_{k = 0}^{n-1} (1 + G_k)^p \Big)
    \big( \E \big[ \sup_{0 \le k \le n} F_k \big] \big)^{p}
  \end{align}
  for all $n \in \N_0$.
\end{theorem}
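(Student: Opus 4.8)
The plan is to reduce the stochastic statement to a pathwise application of the deterministic discrete Gronwall inequality, followed by Doob's maximal inequality for discrete-time martingales. Throughout, write $H_n := \prod_{k=0}^{n-1}(1+G_k)$ with $H_0 := 1$, so that $(H_n)_{n \in \N_0}$ is nondecreasing, satisfies $H_n \ge 1$, and $H_n$ is $\F_{n-1}$-measurable. First I would record the elementary deterministic fact: if $(x_n)$ and $(g_n)$ are real sequences with $g_n \ge 0$ and $(a_n)$ is an arbitrary real sequence with $x_n \le a_n + \sum_{k=0}^{n-1} g_k x_k$ for all $n$, then
\[
  x_n \le a_n + \sum_{k=0}^{n-1} \Big(\prod_{j=k+1}^{n-1}(1+g_j)\Big) g_k a_k .
\]
This is proved by putting $s_n := \sum_{k=0}^{n-1} g_k x_k$, noting $s_{n+1} \le (1+g_n)s_n + g_n a_n$ (here $g_n \ge 0$ is used), and unrolling the recursion; crucially no sign condition on $a_n$ or $x_n$ enters, so the estimate applies pathwise to \eqref{eq:cond} with the non-sign-definite choice $a_n := F_n + M_n$.

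Applying this pathwise and splitting the resulting sum into an $F$-part and an $M$-part, I would then use the telescoping identity $\sum_{k=0}^{n-1}\big(\prod_{j=k+1}^{n-1}(1+G_j)\big)G_k = H_n - 1$ to bound the $F$-part by $\big(\max_{0\le k\le n}F_k\big) H_n$, and summation by parts — together with $M_0 = 0$ and the identity $\frac{G_k}{H_{k+1}} = \frac{1}{H_k} - \frac{1}{H_{k+1}}$ — to rewrite the $M$-part as $H_n L_n$, where
\[
  L_n := \sum_{k=1}^{n} \frac{M_k - M_{k-1}}{H_k}.
\]
Since $(1/H_k)$ is predictable and bounded by $1$ while the increments $M_k - M_{k-1}$ are integrable martingale differences, $(L_n)_{n \in \N_0}$ is a genuine $(\F_n)$-martingale with $L_0 = 0$. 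Altogether one obtains $X_n \le H_n A_n$ with $A_n := \max_{0\le k\le n} F_k + L_n$, and $A_n \ge 0$ almost surely because $0 \le X_n \le H_n A_n$ and $H_n > 0$.

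Since $(H_n)$ is nondecreasing and $A_n \ge 0$, one has $\sup_{0\le k\le n} X_k \le H_n \sup_{0\le k\le n} A_k$, so Hölder's inequality with exponents $\mu,\nu$ gives
\[
  \E\big[\sup_{0\le k\le n} X_k^{\,p}\big] \le \Big\|\prod_{k=0}^{n-1}(1+G_k)^{p}\Big\|_{L^{\mu}(\Omega)} \Big(\E\big[\big(\sup_{0\le k\le n} A_k\big)^{p\nu}\big]\Big)^{1/\nu}.
\]
The last factor is handled by observing that $A$ is a nonnegative submartingale with $\E[A_n] = \E[\max_{0\le k\le n}F_k]$, so Doob's $L^1$-maximal inequality gives $\lambda\,\P(\sup_{0\le k\le n} A_k \ge \lambda) \le \E[\max_{0\le k\le n}F_k]$ for all $\lambda > 0$; combining this with the layer-cake formula and the elementary fact that $\lambda\,\P(\xi \ge \lambda) \le c$ for all $\lambda>0$ implies $\E[\xi^{q}] \le \frac{c^{q}}{1-q}$ for $\xi \ge 0$ and $q \in (0,1)$, and using $q = p\nu < 1$, one gets $\E\big[(\sup_{0\le k\le n} A_k)^{p\nu}\big] \le \frac{1}{1-\nu p}\big(\E[\sup_{0\le k\le n} F_k]\big)^{p\nu}$. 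Substituting and taking the $\nu$-th root yields \eqref{eq:gron1} (in fact with the slightly smaller constant $(1-\nu p)^{-1/\nu}$; the stated constant also drops out if one instead splits $\sup_k A_k \le \max_{k\le n} F_k + \sup_k L_k$ and invokes subadditivity of $t \mapsto t^{p\nu}$). The cases $\E[\sup_{0\le k\le n}F_k] = \infty$ and infinite $L^{\mu}$-norm are trivial, and \eqref{eq:gron2} is the special case $\mu = \infty$, $\nu = 1$, where $\prod_{k=0}^{n-1}(1+G_k)^{p}$ is deterministic.

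The genuine obstacle is the martingale term $M_n$, which is not sign-definite, so no Gronwall-type recursion can be run on it directly. The resolution is exactly the combination above: apply the deterministic Gronwall inequality pathwise (legitimate since only $G_k \ge 0$ is needed), then recognise that the $G$-weighted partial sums of $M$ collapse, via summation by parts, into $H_n$ times the martingale transform $\sum_{k} (M_k - M_{k-1})/H_k$, after which the $0<p<1$ maximal estimate for nonnegative (sub/super)martingales closes the argument. Verifying the summation-by-parts identity and the nonnegativity of $A_n$ are the only points requiring care.
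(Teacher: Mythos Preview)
Your argument is correct and follows the paper's proof through the main structural steps: pathwise application of the deterministic discrete Gronwall lemma, the telescoping bound on the $F$-part, summation by parts to rewrite the $M$-part as $H_n L_n$ with $L$ a martingale transform, and then H\"older with exponents $\mu,\nu$. The only genuine difference is in the endgame. The paper splits $\sup_k A_k \le F_n^\ast + L_n^\ast$, proves a separate sup--inf martingale inequality (their Lemma~\ref{lem:martineq}: $\E[(L_n^\ast)^{q}] \le (1-q)^{-1}(\E[-\inf_k L_k])^{q}$), and then uses $-\inf_k L_k \le F_n^\ast$ together with Jensen on $\E[(F_n^\ast)^{q}]$; this produces the constant $\big(1+\tfrac{1}{1-\nu p}\big)^{1/\nu}$. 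You instead observe that $A_n = F_n^\ast + L_n$ is itself a nonnegative submartingale with $\E[A_n] = \E[F_n^\ast]$, and apply Doob's weak-type maximal inequality plus layer-cake directly to $A$. This is slightly more economical---it bypasses the separate lemma and yields the sharper constant $(1-\nu p)^{-1/\nu}$---although the analytic content is the same, since the paper's Lemma~\ref{lem:martineq} is itself proved via a Doob-type stopping argument and the identical layer-cake computation.
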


The main novelty of Theorem~\ref{th:gronwall} and its continuous time
counter-part in \cite{scheutzow2013} is the presence of a martingale
term on the right hand side of Equation~\eqref{eq:cond}. In this situation
deterministic versions of the Gronwall Lemma usually require to first take 
expectation in Equation~\eqref{eq:cond} in order to discard the
centered martingale from the inequality. However, this line of arguments then
often results in weaker estimates in the sense that taking the supremum with
respect to $k$ would occur outside the expectation on the left hand side of
Equations~\eqref{eq:gron1} and \eqref{eq:gron2}. 

We emphasize that the estimates in Equations~\eqref{eq:gron1} and
\eqref{eq:gron2} are uniform with respect to the martingale $(M_n)_{n \in
\N_0}$. The price we have to pay for this uniformity is the restriction of the
parameter $p$ to the interval $(0,1)$. As already indicated in Remark~3 in
\cite{scheutzow2013} the martingale inequality in Lemma~\ref{lem:martineq}
cannot be extended to $p \ge 1$. Instead one could try to apply, for instance, 
Burkholder-Davis-Gundy-type inequalities resulting in 
the appearance of the quadratic variation of the martingale
on the right hand side of the estimates.

In addition, it is worth to take note of the following subtle difference between
Theorem~\ref{th:gronwall} and its continuous time counter-part in
\cite{scheutzow2013}: On the right hand side of Equations~\eqref{eq:gron1} and
\eqref{eq:gron2} we have the $p$-th power of the expectation of $\sup_{0 \le k
\le n} F_k$. In \cite[Theorem~4]{scheutzow2013} the order of the $p$-th power
and the expectation is reversed resulting in a sharper estimate. The reason for
this difference lies in the martingale inequality in Lemma~\ref{lem:martineq}
which for discrete time martingales only holds true in the weaker form used in
this paper. Compare further with \cite[Remark~3]{scheutzow2013}.

The proof of Theorem~\ref{th:gronwall} is mostly based on two ingredients: 
The first is a discrete version of the classical Gronwall Lemma which is found
in Lemma~\ref{lem:detGronw} below. The second ingredient is an
inequality stated in Lemma~\ref{lem:martineq} that relates the $L^p$-norm, $p
\in (0,1)$, of the supremum of a time discrete martingale to its infimum.
Lemma~\ref{lem:martineq} therefore is the discrete time counter-part of
\cite[Proposition~1]{scheutzow2013}. A further version of
the latter with optimal constant is also found in \cite{banuelos2013}. For all
details of the proof we refer to Section~\ref{sec:proof}. 

As already mentioned, discrete versions of the Gronwall Lemma are often used in
order to derive \emph{a priori} estimates for numerical approximations of
differential equations. To this end we demonstrate in Section~\ref{sec:appl}
how Theorem~\ref{th:gronwall} can be applied in order to estimate the
$L^p$-norm, $p \in (0,2)$, of the backward Euler-Maruyama method for stochastic
differential equations under rather mild conditions on the coefficient
functions, namely continuity and a global coercivity condition (see
Equation~\eqref{eq:coerc} below).

\textbf{Notation:} Throughout this paper we use the convention that sums over
empty index sets are equal to zero and products over empty index sets are equal
to one. Further, we let $\N := \{1, 2, \ldots \}$ denote the set of all
positive integers and define $\N_0 := \N \cup \{0\}$. As usual, we write $a
\wedge b := \min(a,b)$ and $a \vee b := \max(a,b)$ for all $a, b \in \R$.
Finally, for an arbitrary sequence $(F_n)_{n \in \N_0}$ of random variables we
set  
\begin{align*}
  F_n^\ast := \sup_{0 \le k \le n} F_k.
\end{align*}

\section{Proof of the discrete stochastic Gronwall Lemma}
\label{sec:proof}

In this section we present a proof of Theorem~\ref{th:gronwall}. As already
indicated in the introduction, we first state a corresponding deterministic
version of the discrete Gronwall Lemma. For completeness we include a
proof based on a presentation by John
M.~Holte\footnote{\url{http://homepages.gac.edu/~holte/publications/gronwallTALK.pdf}}.
Then, we derive a discrete time version of a martingale inequality from
\cite{scheutzow2013} that gives a bound for the supremum of the martingale in
terms of its infimum. 

\begin{lemma}
  \label{lem:detGronw}
  Consider real-valued sequences $(f_n)_{n \in \N_0}$, $(g_n)_{n \in \N_0}$,
  and $(y_n)_{n \in \N_0}$. Assume that $(g_n)_{n \in
  \N_0}$ is nonnegative. If we have
  \begin{align}
    \label{eq:ineq1}
    y_n \le f_n + \sum_{k = 0}^{n-1} g_k y_k, \quad \text{for all } n \in \N_0,
  \end{align}
  then it also holds true that
  \begin{align}
    \label{eq:ineq2}
    y_n \le f_n + \sum_{k = 0}^{n-1} f_k g_k \prod_{j = k
    +1}^{n-1} (1 + g_j), \quad \text{for all } n \in \N_0.
  \end{align}  
\end{lemma}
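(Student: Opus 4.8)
My plan is to prove the inequality \eqref{eq:ineq2} by induction on $n$. The base case $n = 0$ is immediate: both sums are empty, so \eqref{eq:ineq1} gives $y_0 \le f_0$, which is exactly \eqref{eq:ineq2} for $n = 0$. For the inductive step I would assume that \eqref{eq:ineq2} holds for all indices $0, 1, \ldots, n-1$ and then estimate $y_n$ using the hypothesis \eqref{eq:ineq1}, namely $y_n \le f_n + \sum_{k=0}^{n-1} g_k y_k$. Into each term $g_k y_k$ I substitute the induction hypothesis $y_k \le f_k + \sum_{j=0}^{k-1} f_j g_j \prod_{\ell = j+1}^{k-1}(1+g_\ell)$, using nonnegativity of $g_k$ to preserve the inequality.

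After this substitution I would have a double sum; the key algebraic step is to interchange the order of summation and recognize a telescoping (or rather, a geometric-type) identity. Concretely, after collecting terms, the coefficient of $f_k g_k$ (for fixed $k$) becomes $1 + \sum_{m=k+1}^{n-1} g_m \prod_{\ell = k+1}^{m-1}(1+g_\ell)$, and the identity to verify is
\begin{align*}
  1 + \sum_{m = k+1}^{n-1} g_m \prod_{\ell = k+1}^{m-1}(1 + g_\ell) = \prod_{\ell = k+1}^{n-1}(1 + g_\ell).
\end{align*}
This is itself a trivial induction on $n$ (or follows from the telescoping $g_m \prod_{\ell=k+1}^{m-1}(1+g_\ell) = \prod_{\ell=k+1}^{m}(1+g_\ell) - \prod_{\ell=k+1}^{m-1}(1+g_\ell)$), and it converts the messy double sum back into the clean single sum appearing in \eqref{eq:ineq2}.

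The main obstacle, such as it is, is purely bookkeeping: getting the index ranges in the nested products and sums exactly right when one substitutes the induction hypothesis and swaps the summation order, and making sure the empty-product and empty-sum conventions (stated in the Notation paragraph) are applied consistently at the boundary indices $k = n-1$ and $j = k-1$. There is no genuine analytic difficulty here; once the telescoping identity above is isolated, the rest is a routine reindexing. An alternative, essentially equivalent route would be to iterate the inequality \eqref{eq:ineq1} directly (substituting the bound for $y_k$ into itself repeatedly) rather than dressing it up as an induction, but the inductive formulation keeps the index manipulations the most transparent.
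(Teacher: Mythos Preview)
Your proposal is correct and follows essentially the same route as the paper: induction on $n$, substituting the induction hypothesis into \eqref{eq:ineq1}, swapping the order of summation, and then invoking the telescoping identity $1 + \sum_{m=k+1}^{n-1} g_m \prod_{\ell=k+1}^{m-1}(1+g_\ell) = \prod_{\ell=k+1}^{n-1}(1+g_\ell)$ to collapse the double sum. The paper proves the identity the same way you suggest, via the telescoping difference $\prod_{\ell=k+1}^{m}(1+g_\ell) - \prod_{\ell=k+1}^{m-1}(1+g_\ell) = g_m \prod_{\ell=k+1}^{m-1}(1+g_\ell)$.
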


\begin{proof}
  Obviously, the assertion is true for $n = 0$. Now let $n > 0$ and assume that
  \eqref{eq:ineq2} is satisfied for all $0 \le k < n$. Then, by inserting
  \eqref{eq:ineq2} into \eqref{eq:ineq1} for all $k < n$ we obtain
  \begin{align*}
    y_n &\le f_n + \sum_{k = 0}^{n-1} g_k y_k\\
    &\le f_n + \sum_{k = 0}^{n-1} g_k \Big( f_k + \sum_{i = 0}^{k-1} f_i g_i
    \prod_{j = i+1}^{k-1} (1 + g_j)  \Big)\\
    &= f_n + \sum_{k = 0}^{n-1} g_k f_k + \sum_{k = 0}^{n-1} \sum_{i =
    0}^{k - 1} f_i g_i g_k \prod_{j = i+1}^{k-1} (1 + g_j) \\
    &= f_n + \sum_{k = 0}^{n-1} g_k f_k + \sum_{i = 0}^{n-1} f_i g_i
    \sum_{k = i+1}^{n-1} g_k \prod_{j = i+1}^{k-1} ( 1 + g_j)\\
    &= f_n + \sum_{k = 0}^{n-1} f_k g_k \Big( 1 + \sum_{i = k + 1}^{n-1} g_i
    \prod_{j = k + 1}^{i - 1} ( 1 + g_j) \Big).    
  \end{align*}
  Thus, it suffices to show that
  \begin{align}
    \label{eq:eq1}
    1 + \sum_{i = k }^{n-1} g_i \prod_{j = k }^{i - 1} (1 + g_j ) 
    = \prod_{j = k}^{n-1} ( 1 +  g_j), \quad \text{ for all } 0
    \le k \le n - 1.
  \end{align}
  But this follows from a telescopic sum argument 
  %and the fact that $1 - \ee^x \le -x$ 
  as follows:
  \begin{align*}
    1 - \prod_{j = k }^{n-1} (1 + g_j) 
    &= \sum_{i = k }^{n-1} \Big( \prod_{j = k}^{i - 1} (1 + g_j)
    - \prod_{j = k}^{i} (1 + g_j) \Big) \\
    &= \sum_{i = k}^{n-1} \Big( \big( 1 - (1 + g_i) \big) \prod_{j = k}^{i-1}
    (1 + g_j) \Big)\\ 
    &= - \sum_{i = k }^{n-1} g_i \prod_{j = k}^{i-1} (1 + g_j). 
  \end{align*}
  Rearranging the terms yields \eqref{eq:eq1} and completes the proof.
\end{proof}

Next, we introduce the discrete time counter-part of
Proposition~1 in \cite{scheutzow2013}.

%\begin{lemma}
%\label{lem:martineq_old}
%  Let $(M_n)_{n \in \N_0}$ be an $(\F_n)_{n \in \N_0}$-martingale with $M_0 =
%  0$ and $(H_n)_{n \in \N_0}$ be a nonnegative and adapted sequence of random
%  variables. If it holds $- M_n \le H_n$ for all $n \in \N_0$, then we have,
%  for every $p \in (0,1)$, that
%  \begin{align*}
%    \E [ \sup_{0 \le k \le n} M_k^p ] \le \frac{1}{1 - p} \E [ \sup_{0 \le k
%    \le n} H_k ]^p.
%  \end{align*}
%\end{lemma}

\begin{lemma}
  \label{lem:martineq}
  Let $(M_n)_{n \in \N_0}$ be an $(\F_n)_{n \in \N_0}$-martingale with $M_0 =
  0$. Then, for every $p \in (0,1)$ and every $n \in \N_0$ we have
  \begin{align}
    \label{eq:martineq1}
    \E [ (\sup_{0 \le k \le n} M_k)^p ] \le \frac{1}{1 - p} \big( \E [ - \inf_{0
    \le k \le n} M_k  \ ] \big)^p
  \end{align}
  or, equivalently, 
  \begin{align}
    \label{eq:martineq2}
    \E [ (\sup_{k \in \N_0} M_k)^p ] \le \frac{1}{1 - p} \big( \E [ - \inf_{ k
    \in \N_0} M_k  \ ] \big)^p.
  \end{align}
\end{lemma}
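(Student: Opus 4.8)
The plan is to derive a weak-type $(1,1)$ estimate for the running maximum of $M$ and then feed it into the layer-cake formula. Write $S_n := \sup_{0 \le k \le n} M_k$ (which is $M_n^\ast$ in the paper's notation) and $I_n := \inf_{0 \le k \le n} M_k$; since $M_0 = 0$ we have $S_n \ge 0 \ge I_n$, and for fixed $n$ both $S_n$ and $-I_n$ are dominated by $\sum_{k=0}^n |M_k| \in L^1(\Omega)$, so all expectations below are finite. I would first dispatch the trivial cases: if $\E[-I_n] = 0$ then $I_n = 0$ a.s., which together with $\E[M_k]=0$ forces $M_k = 0$ a.s.\ for $0 \le k \le n$ and makes both sides of \eqref{eq:martineq1} vanish; and in the form \eqref{eq:martineq2} the case $\E[-\inf_{k\in\N_0} M_k] = \infty$ is vacuous.

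Next, I would fix $\lambda > 0$ and introduce the bounded stopping time
\[
  \tau_\lambda := \min\{ k \in \N_0 : M_k \ge \lambda \} \wedge n .
\]
Because $\tau_\lambda \le n$ is bounded, the optional stopping theorem gives $\E[M_{\tau_\lambda}] = M_0 = 0$ and, in particular, $M_{\tau_\lambda} \in L^1(\Omega)$. Splitting the expectation according to whether $S_n \ge \lambda$: on $\{S_n \ge \lambda\}$ the first passage of $M$ to the level $\lambda$ occurs at some index $\le n$, so $M_{\tau_\lambda} \ge \lambda$ there; on the complement $\tau_\lambda = n$, hence $M_{\tau_\lambda} = M_n \ge I_n$. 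Using in addition that $I_n \le 0$ (so restricting its integral to a smaller event only increases it), we obtain
\[
  0 = \E[M_{\tau_\lambda}] \ge \lambda \, \P(S_n \ge \lambda) + \E[I_n] = \lambda \, \P(S_n \ge \lambda) - \E[-I_n],
\]
that is, the weak-type bound $\P(S_n \ge \lambda) \le \E[-I_n]/\lambda$ for every $\lambda > 0$; trivially also $\P(S_n \ge \lambda) \le 1$.

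Then I would apply the layer-cake representation to the nonnegative random variable $S_n$: for $p \in (0,1)$,
\[
  \E[S_n^p] = \int_0^\infty p \lambda^{p-1} \, \P(S_n \ge \lambda) \diff\lambda .
\]
Writing $c := \E[-I_n] \in [0,\infty)$ and inserting $\P(S_n \ge \lambda) \le 1$ on $(0,c]$ and $\P(S_n \ge \lambda) \le c/\lambda$ on $(c,\infty)$ yields
\[
  \E[S_n^p] \le \int_0^c p \lambda^{p-1} \diff\lambda + c \int_c^\infty p \lambda^{p-2} \diff\lambda = c^p + \frac{p}{1-p}\, c^p = \frac{1}{1-p}\, c^p,
\]
where the tail integral converges precisely because $p < 1$; this is \eqref{eq:martineq1}. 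The equivalence with \eqref{eq:martineq2} is then a routine monotone-convergence argument: letting $n \to \infty$ in \eqref{eq:martineq1} gives \eqref{eq:martineq2} (both $\sup_{0\le k\le n} M_k$ and $-\inf_{0\le k\le n} M_k$ increase to their analogues over all of $\N_0$), and conversely \eqref{eq:martineq1} is \eqref{eq:martineq2} applied to the stopped martingale $(M_{k \wedge n})_{k\in\N_0}$.

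I expect the only genuinely delicate point to be the weak-type step: making sure $M_{\tau_\lambda}$ is integrable (handled by optional stopping for a bounded stopping time) and that the inequality $M_{\tau_\lambda} \ge \lambda$ on $\{S_n \ge \lambda\}$ is argued correctly — the maximum is attained since we only take finitely many values, and on that event $\tau_\lambda$ is exactly the first passage time to level $\lambda$. Everything after the weak-type bound is an elementary computation, and the role of the restriction $p \in (0,1)$ is transparent: it is exactly what makes $\int_c^\infty \lambda^{p-2}\diff\lambda$ finite, matching the remark in the introduction that the inequality cannot hold for $p \ge 1$.
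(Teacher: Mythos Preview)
Your proof is correct and follows essentially the same route as the paper: define the first-passage time $\tau_\lambda=\min\{k:M_k\ge\lambda\}\wedge n$, extract the weak-type bound $\P(S_n\ge\lambda)\le \E[-I_n]/\lambda$, and integrate via the layer-cake formula to produce the constant $\tfrac{1}{1-p}$. The only cosmetic difference is that the paper passes through the stopped martingale $\widetilde M_k=M_{k\wedge\tau_\lambda}$ and the identity $\E[\widetilde M_n\vee 0]=\E[(-\widetilde M_n)\vee 0]$ to reach the same weak-type inequality, whereas you obtain it directly from optional stopping by splitting $\E[M_{\tau_\lambda}]$ over $\{S_n\ge\lambda\}$ and its complement; the two derivations are equivalent.
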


\begin{proof}
  The equivalence of \eqref{eq:martineq1} and \eqref{eq:martineq2} follows
  at once from the monotone convergence theorem and from stopping the
  martingale at $n$, respectively. Hence, it suffices to prove
  \eqref{eq:martineq1}.

  Since $M_0 = 0$ we get that
  \begin{align*}
    0 = \E[ M_n ] = \E [ (M_n \vee 0)]  - \E[ (- M_n) \vee 0 ], \quad
    \text{ for all } n \in \N_0,
  \end{align*}
  and, consequently,  
  \begin{align*}
    \E[ M_n \vee 0 ] = \E[ (- M_n) \vee 0 ] \le \E\big[ \sup_{0 \le k \le n}
    (-M_k) \big] = \E \big[ - \inf_{0 \le k \le n} M_k \big] 
  \end{align*}
  for all $n \in \N_0$.
  Next, for $n \in \N_0$ we define a mapping $\varphi_n \colon [0,\infty) \to
  [0,1]$ by
  \begin{align*}
    \varphi_n(x) = \P \big( \sup_{0 \le k \le n} M_k \ge x \big), \quad \text{
    for all } x \ge 0.
  \end{align*}
  Now, fix $x > 0$ and $n \in \N_0$ arbitrarily and define the stopping times
  \begin{align*}
    \tau_n := \inf \big\{ m \in \N_0 \, : \, M_m \ge x \} \wedge n. 
  \end{align*}
  We set 
  \begin{align*}
    \widetilde{M}_k := M_{k \wedge \tau_n}
  \end{align*}
  and note that $(\widetilde{M}_k)_{k \in \N_0}$ is again an $(\F_k)_{k \in
  \N_0}$-martingale with $\widetilde{M}_0 = M_0 = 0$. In addition, we have $\E
  [\widetilde{M}_k \vee 0] = \E[ (- \widetilde{M}_k) 
  \vee 0 ] \le \E [ - \inf_{0 \le \ell \le (n \wedge k)} M_\ell ]$ for all $k
  \in \N_0$ and the same $n \in \N_0$ as above.
  Furthermore, note that $\{ \sup_{0 \le k \le n} M_k \ge x \} =
  \{ \sup_{0 \le k \le n} \widetilde{M}_k \ge x \} = \{ \widetilde{M}_n \ge x
  \}$ and, therefore,
  \begin{align*}
    \E [ \widetilde{M}_n \vee 0] \ge x \varphi_n(x).
  \end{align*}
  Altogether, this implies
  \begin{align*}
    \varphi_n(x) \le \frac{1}{x} \E [ \widetilde{M}_n \vee 0] \le
    \frac{1}{x} \E \big[ - \inf_{0 \le \ell \le n} M_\ell \big].
  \end{align*}
  Finally, we obtain for every $p \in (0,1)$
  \begin{align*}
    \E \big[ (\sup_{0 \le k \le n} M_k )^{p} \big] &= \int_0^\infty \P \big(
    \sup_{0 \le k \le n} M_k \ge x^{\frac{1}{p}} \big) \diff{x}\\
    &\le \int_{0}^{\infty} \big( x^{-\frac{1}{p}} \E \big[ - \inf_{0 \le \ell
    \le n} M_\ell \big] \big) \wedge 1 \diff{x}\\
    &= \big( \E \big[ - \inf_{0 \le \ell \le n} M_\ell \big] \big)^p +
    \frac{p}{1 - p} \big( \E \big[ - \inf_{0 \le \ell \le n} M_\ell \big]
    \big)^p\\ 
    &= \frac{1}{1 - p} \big( \E \big[ - \inf_{0 \le \ell \le n} M_\ell \big]
    \big)^p, 
  \end{align*}
  which is the assertion.
\end{proof}

\begin{remark}
  The constant $\frac{1}{1-p}$ in Lemma~\ref{lem:martineq} is (most likely) not
  sharp but at most off from the optimal constant $C_p$ by a factor of
  $\frac{4}{\pi}$. This can be seen as follows: Let $W \colon [0, \infty)
  \times \Omega \to \R$ be a 
  standard Wiener process and define the stopping time $\tau_{-1} = \inf \{ s
  \ge 0 \, : \, W(s) = -1 \}$. Set $\widehat{M}(t) := W(t \wedge
  \tau_{-1})$ and $h_k = 2^{-k}$ for $k \in \N_0$. Then, for every $k \in \N$
  we obtain a discrete time martingale by setting $M_n^k := \widehat{M}(nh_k)$.
  From the continuity of the trajectories of the Wiener process
  and the monotone convergence theorem it follows that 
  \begin{align*}
    \lim_{k \to \infty} &\E \big[ \big(\sup_{ \ell \in \N_0 } M_\ell^k \big)^p
    \big]   
    = \E \big[ \big( \sup_{t \ge 0} \widehat{M}(t) \big)^p
    \big]=\int_0^{\infty} \P \big( 
    \sup_{t \ge 0} \widehat{M}(t) \ge x^{\frac{1}{p}} \big) \diff{x}\\
    &=\int_0^{\infty} \big(1+x^{1/p}\big)^{-1} \diff{x}=\frac{\pi p}{\sin (\pi
    p)} =\frac{\pi p}{\sin (\pi p)} \lim_{k \to \infty}  
    \E \big[ \big(-\inf_{ \ell \in \N_0 } M_\ell^k \big)^p
    \big].
  \end{align*}
  Hence,
  $$
  \frac{\pi p}{\sin (\pi p)} \le C_p \le \frac 1{1-p}.
  $$
  The ratio $R_p:=\frac 1{1-p} \frac{\sin (\pi p)}{\pi p}$ is easily seen to
  obtain its maximum value $\frac{4}{\pi}$ at $p=1/2$. Note that 
  $$
  \lim_{p \downarrow 0} R_p= \lim_{p \uparrow 1} R_p=1,
  $$
  so the constant $\frac{1}{1-p}$ in Lemma \ref{lem:martineq} becomes optimal
  in the limits $p \to 0$ and $p \to 1$. 
\end{remark}

Now we are well-prepared for the proof of Theorem~\ref{th:gronwall}:

\begin{proof}[Proof of Theorem~\ref{th:gronwall}]
  We first apply Lemma~\ref{lem:detGronw} $\omega$-wise and
  obtain 
  \begin{align*}
    X_n &\le F_n + M_n + \sum_{k = 0}^{n-1}
    \big(F_k + M_k \big) G_k \prod_{j = k + 1}^{n-1} (1 + G_j) \\
    &= F_n + \sum_{k = 0}^{n-1} F_k G_k \prod_{j = k + 1}^{n-1} (1 + G_j)
    + M_n + \sum_{k = 0}^{n-1} M_k G_k \prod_{j = k + 1}^{n-1} (1 + G_j).
  \end{align*}
  Now, since $F_k \le F_n^{\ast}$ for all $k \le n$ we have
  \begin{align*}
    F_n + \sum_{k = 0}^{n-1} F_k G_k \prod_{j = k + 1}^{n-1} (1 + G_j)
    &\le F_n^\ast \Big( 1 + \sum_{k = 0}^{n-1} G_k \prod_{j = k + 1}^{n-1} ( 1+
    G_j) \Big)\\
    &= F_n^\ast \prod_{j = 0}^{n-1} (1 + G_j),
  \end{align*}
  where we applied \eqref{eq:eq1}. Moreover, it holds true that
  \begin{align*}
    \sum_{k = 0}^{n-1} M_k G_k \prod_{j = k + 1}^{n-1} ( 1 + G_j) &=
    \sum_{k = 0}^{n-1} M_k (1 + G_k - 1) \prod_{j = k + 1}^{n-1} ( 1 + G_j)\\
    &= \sum_{k = 0}^{n-1} M_k \Big( \prod_{j = k }^{n-1} ( 1 + G_j) - \prod_{j =
    k + 1}^{n-1} ( 1 + G_j) \Big). 
  \end{align*}
  Hence, since $M_0 = 0$ we get by summation by parts
  \begin{align*}
    M_n + \sum_{k = 0}^{n-1} M_k G_k \prod_{j = k + 1}^{n-1} (1 + G_j)
    &= \sum_{k = 0}^{n-1} \big( M_{k + 1} - M_k \big) \prod_{j =
    k+1}^{n-1} (1 + G_j)\\
    &= L_n \prod_{i = 0}^{n-1} (1 + G_i),
  \end{align*}
  where 
  \begin{align*}
    L_n := \sum_{k = 0}^{n-1} ( M_{k + 1} - M_k ) \prod_{j = 0}^k (1 +
    G_j)^{-1} 
  \end{align*}
  is a further $(\F_n)_{n \in \N_0}$-martingale. Altogether, we have shown that 
  \begin{align}
    \label{eq:ineq_X}
    X_n \le \big( F_n^\ast + L_n \big) \prod_{i = 0}^{n-1} (1 + G_i).
  \end{align}
  Hence, H\"older's inequality with $1 = \frac{1}{\mu} +
  \frac{1}{\nu}$ yields
  \begin{align*}
    \E \big[ \sup_{0 \le k \le n} X_k^p \big] &\le \Big\| \prod_{i = 0}^{n-1}
    (1 + G_i)^p \Big\|_{L^{\mu}(\Omega)} \big\| ( F_n^\ast + L_n^\ast )^p
    \big\|_{L^\nu(\Omega)}\\
    &\le \Big\| \prod_{i = 0}^{n-1} (1 + G_i)^p \Big\|_{L^{\mu}(\Omega)}
    \Big( \E \big[ (F_n^\ast)^{\nu p} \big] + \E \big[ (L_n^\ast )^{\nu p}
    \big] \Big)^{\frac{1}{\nu}}.   
  \end{align*}
  Moreover, since $X_n \ge 0$ it follows from \eqref{eq:ineq_X} 
  that $- L_n \le F_n^\ast$ for all $n \in \N_0$. Therefore, we have $- \inf_{0
  \le k \le n} L_k \le F_n^\ast$. Thus, after applying the martingale
  inequality from Lemma~\ref{lem:martineq} to $\E \big[ (L_n^\ast )^{\nu p}
  \big]$ we conclude 
  \begin{align*}
    \E [ \sup_{0 \le k \le n} X_k^p ] \le 
    \Big\| \prod_{i = 0}^{n-1} (1 + G_i)^p \Big\|_{L^{\mu}(\Omega)}
    \Big( \E \big[ ( F_n^\ast)^{\nu p} \big]  + \frac{1}{1 - \nu p} \big( \E
    \big[ F_n^\ast \big] \big)^{\nu p} \Big)^{\frac{1}{\nu}}.
  \end{align*}
  An application of Jensen's inequality completes the proof of
  Equation~\eqref{eq:gron1}. The proof of Equation~\eqref{eq:gron2} follows
  from the same steps but with $\mu = \infty$.
\end{proof}

\section{Application to numerical schemes}
\label{sec:appl}

In this section we prove an a priori estimate for the backward Euler-Maruyama
approximation of solutions to stochastic differential equations, whose
coefficient functions satisfy a coercivity condition.

To be more precise let $T > 0$ and $d,m \in \N$. Consider the stochastic
ordinary differential equation 
\begin{align}
  \label{eq:SDE}
  \begin{split}
    \diff{X(t)} &= f(X(t)) \diff{t} + g(X(t)) \diff{W(t)}, \quad t \in [0,T],\\
    X(0) &= X_0,
  \end{split}
\end{align}
where $f \colon \R^d \to \R^d$ and $g \colon \R^d \to \R^{d \times m}$ denote
the drift and diffusion coefficient functions, respectively. Further, $W \colon
[0,T] \times \Omega \to \R^m$ is a standard Wiener process on a filtered
probability space $(\Omega, \F, (\F_t)_{t \in [0,T]}, \P)$. For simplicity, let
the initial condition $X_0 \in \R^d$ be deterministic.

We assume that $f$ and $g$ are continuous and satisfy the following coercivity
condition: There exists $L \ge 0$ such that
\begin{align}
  \label{eq:coerc}
  \langle f(x) , x \rangle + \frac{1}{2} | g(x) |^2 \le L \big( 1 + |x|^2 \big)
\end{align}
for all $x \in \R^d$, where we let $|\cdot|$ denote the Euclidean norms on
$\R^d$ and $\R^m$ as well as the Frobenius norm if applied to matrices from
$\R^{d \times m}$.  

An often considered numerical method for the approximation of the solution $X$
to \eqref{eq:SDE} is the \emph{backward Euler-Maruyama method}, see for
instance \cite{kloeden1992, milstein1995}, given by
\begin{align}
  \label{eq:BEM}
  \begin{split}
    Y^{j+1} &= Y^j + h f(Y^{j+1}) + g(Y^j) \Delta_h W^{j+1}, \quad j =
    1,\ldots,N_h,\\
    Y^0 &= X_0,
  \end{split}
\end{align}
where $h \in (0,1)$ denotes the equidistant \emph{step size} and $N_h \in \N$
is determined by $N_h h \le T < (N_h + 1) h$. The stochastic increment is given
by $\Delta_h W^{j+1} = W( t_{j+1} ) - W(t_j)$, where $t_j = jh$.

Our aim is to prove the following \emph{a priori estimate} on
$(Y^{j})_{j = 0}^{N_h}$, which is a sharper version of Theorem~4.2 in
\cite{andersson2015}  in the sense that taking the supremum now occurs inside
the expectation but only with respect to the $L^{2p}$-norm for $p \in (0,1)$.

\begin{prop}
  Let $h_0 \in (0, (2L)^{-1} )$ denote an upper step size bound. For every $p
  \in (0, 1)$ and for every
  $(\F_{nh})_{n \in \N_0}$-adapted process
  $(Y^n)_{n \in \N_0}$ satisfying \eqref{eq:BEM} with $h \in 
  (0,h_0)$ we have
  \begin{align*}
    \E \big[ \sup_{0 \le j \le N_h} \big( | Y^j
    |^{2} +  h |g(Y^j)|^2 \big)^p \big] &\le 
    \Big(1 + \frac{1}{1 - p} \Big)
    \exp\big( p (1 - 2 h_0 L)^{-1} 2 L T  \big)\\
    &\quad \times \Big( |X_0|^2 + (1 - 2 h_0 L)^{-1}
    \big( h_0 |g(X_0)|^2 + 2 L T \big) \Big)^p.
  \end{align*}
  In particular, this bound is independent of the step size $h$.
\end{prop}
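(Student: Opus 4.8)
The plan is to check that the nonnegative, adapted sequence $\xi_n := |Y^n|^2 + h\,|g(Y^n)|^2$ satisfies an inequality of the type \eqref{eq:cond} with a \emph{deterministic} sequence $(G_k)_{k\in\N_0}$, and then to read off the assertion from \eqref{eq:gron2}. The starting point is a one-step energy estimate. Putting $Z^{j+1} := Y^j + g(Y^j)\,\Delta_h W^{j+1}$, equation \eqref{eq:BEM} reads $Y^{j+1} = Z^{j+1} + h\,f(Y^{j+1})$, so completing the square gives
\begin{align*}
  |Y^{j+1}|^2 &= |Z^{j+1}|^2 + 2h\langle f(Y^{j+1}), Y^{j+1}\rangle - h^2|f(Y^{j+1})|^2 \\
  &\le |Z^{j+1}|^2 + 2h\langle f(Y^{j+1}), Y^{j+1}\rangle.
\end{align*}
Expanding $|Z^{j+1}|^2 = |Y^j|^2 + 2\langle Y^j, g(Y^j)\Delta_h W^{j+1}\rangle + |g(Y^j)\Delta_h W^{j+1}|^2$ and using the coercivity condition \eqref{eq:coerc} at the point $Y^{j+1}$ to bound $2h\langle f(Y^{j+1}),Y^{j+1}\rangle$ by $2hL + 2hL|Y^{j+1}|^2 - h|g(Y^{j+1})|^2$, one arrives at
\begin{align*}
  \xi_{j+1} \le \xi_j + 2hL + 2hL\,|Y^{j+1}|^2 + \Delta_j,
\end{align*}
where $\Delta_j := 2\langle Y^j, g(Y^j)\Delta_h W^{j+1}\rangle + |g(Y^j)\Delta_h W^{j+1}|^2 - h|g(Y^j)|^2$. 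Since $\Delta_h W^{j+1}$ is independent of $\F_{t_j}$, centred, and satisfies $\E\big[|A\,\Delta_h W^{j+1}|^2\big] = h|A|^2$ for every deterministic matrix $A$, we get $\E\big[|g(Y^j)\Delta_h W^{j+1}|^2\mid\F_{t_j}\big] = h|g(Y^j)|^2$ and hence $\E\big[\Delta_j\mid\F_{t_j}\big]=0$: the $\Delta_j$ are martingale increments.

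Using $|Y^{j+1}|^2\le\xi_{j+1}$ and $0<1-2h_0L\le 1-2hL$, I would then rewrite the one-step estimate as $\xi_{j+1}-\xi_j \le 2hL\gamma_h(\xi_j+1)+\gamma_h\Delta_j$ with $\gamma_h := (1-2hL)^{-1}$ (using $\gamma_h-1 = 2hL\gamma_h$) and sum over $j=0,\dots,n-1$ to obtain
\begin{align*}
  \xi_n \le \xi_0 + 2hL\gamma_h\,n + \gamma_h M_n + \sum_{k=0}^{n-1} 2hL\gamma_h\,\xi_k, \qquad M_n := \sum_{j=0}^{n-1}\Delta_j.
\end{align*}
This is exactly \eqref{eq:cond} with $G_k\equiv 2hL\gamma_h$, with $F_n := \xi_0 + 2hL\gamma_h\,n$, and with the martingale $\gamma_h M_n$, which vanishes at $n=0$. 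Invoking \eqref{eq:gron2} and simplifying the constants at $n=N_h$ then yields the claim: since $1+2hL\gamma_h = \gamma_h$ one has $\prod_{k=0}^{N_h-1}(1+G_k)^p = (1-2hL)^{-pN_h}$, and $-\log(1-u)\le u/(1-u)$ on $[0,1)$ together with $2hL<2h_0L$ and $hN_h\le T$ bounds this by $\exp(p(1-2h_0L)^{-1}2LT)$; likewise $F_{N_h} = \xi_0 + 2L\gamma_h(hN_h)$ with $\xi_0 = |X_0|^2 + h|g(X_0)|^2$ is bounded, using $h\le h_0$, $hN_h\le T$ and $(1-2h_0L)^{-1}\ge 1$, by $|X_0|^2 + (1-2h_0L)^{-1}(h_0|g(X_0)|^2 + 2LT)$.

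The one genuine gap is that Theorem~\ref{th:gronwall} requires $\gamma_h M_n$ to be an integrable martingale, whereas a process merely satisfying \eqref{eq:BEM} is not assumed to have any moments. I would close it by a routine localization: for $R>0$ set $\sigma_R := \inf\{n\ge 0 : \xi_n > R\}$; on $\{j<\sigma_R\}$ both $Y^j$ and $g(Y^j)$ are bounded (by $\sqrt{R}$ and $\sqrt{R/h}$), so $\Delta_j\,\one_{\{j<\sigma_R\}}$ is integrable with $\E[\Delta_j\,\one_{\{j<\sigma_R\}}\mid\F_{t_j}]=0$, and $M_n^R := \sum_{j=0}^{n-1}\Delta_j\,\one_{\{j<\sigma_R\}} = M_{n\wedge\sigma_R}$ is a genuine martingale. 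Multiplying the one-step estimate by $\one_{\{j<\sigma_R\}}$ before summing gives the same inequality for the stopped sequence $(\xi_{n\wedge\sigma_R})_n$, with the same deterministic $F_n$, $G_k$ and with martingale $\gamma_h M_n^R$, so Theorem~\ref{th:gronwall} applies and yields the asserted bound for $\E[\sup_{0\le k\le N_h}\xi_{k\wedge\sigma_R}^p]$, uniformly in $R$. Since $\sup_{0\le k\le N_h}\xi_{k\wedge\sigma_R} = \sup_{0\le k\le N_h\wedge\sigma_R}\xi_k$ increases to $\sup_{0\le k\le N_h}\xi_k$ as $R\to\infty$ (a supremum of finitely many a.s.\ finite terms), monotone convergence completes the proof. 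I expect the main work to be bookkeeping: arranging the absorption of the implicit term $2hL|Y^{j+1}|^2$ so that every surviving factor of $h$ is controlled through $hN_h\le T$ rather than through $h_0$ — which is what makes the bound step-size independent — while the martingale structure needed to keep the supremum inside the expectation is supplied for free by Theorem~\ref{th:gronwall}.
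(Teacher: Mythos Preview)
Your proof is correct and follows essentially the same route as the paper: derive the one-step energy inequality $\xi_{j+1}\le \xi_j + 2hL(1+|Y^{j+1}|^2) + \Delta_j$ (your $\Delta_j$ is exactly the paper's $Z^{j+1}$), absorb the implicit term, sum, and apply Theorem~\ref{th:gronwall} with deterministic $G_k$. The only differences are cosmetic: you reach the one-step estimate by expanding $|Y^j+g(Y^j)\Delta_hW^{j+1}|^2$ and discarding $-h^2|f(Y^{j+1})|^2$, whereas the paper uses the polarization identity plus Cauchy--Schwarz/Young and cancels $|Y^{j+1}-Y^j|^2$ (both yield the identical inequality); and you justify the martingale property by localization, whereas the paper proves integrability of $|Y^n|^2$ and $|g(Y^n)|^2$ inductively from \eqref{eq:ineq_n}.
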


\begin{proof}
  Let $(Y^n)_{n \in \N_0}$ be an adapted process satisfying \eqref{eq:BEM}
  with step size $h \in (0,h_0)$. For every $j \in \{0, \ldots, N_h
  - 1\}$ we get from the polarization identity $\langle a - b, a \rangle =
  \frac{1}{2} ( |a|^2 - |b|^2 + |a - b |^2 )$, which is valid for all $a, b \in
  \R^d$, that
  \begin{align*}
      | Y^{j+1} |^2 - | Y^j |^2 + | Y^{j+1} - Y^j |^2 &= 2 \langle Y^{j+1} -
      Y^j, Y^{j+1} \rangle \\
      &= 2 h \langle f (Y^{j+1}), Y^{j+1} \rangle + 2 \langle g(Y^j) \Delta_h 
      W^{j+1}, Y^{j+1} \rangle, 
  \end{align*}
  since $Y^{j+1}$ satisfies \eqref{eq:BEM}.
  Now, an application of the coercivity condition \eqref{eq:coerc} yields
  \begin{align}
    \label{eq:ineq5}
    \begin{split}
      &| Y^{j+1} |^2 - | Y^j |^2 + | Y^{j+1} - Y^j |^2\\
      &\quad \le 2 h L \big( 1 + |Y^{j+1}|^2 \big) - h | g(Y^{j+1}) |^2 
      + 2 \langle g(Y^j) \Delta_h W^{j+1}, Y^{j+1} - Y^j \rangle \\
      &\qquad + 2 \langle g(Y^j) \Delta_h W^{j+1}, Y^j \rangle.
    \end{split}
  \end{align}
  From the Cauchy-Schwarz and Young inequalities we deduce
  \begin{align*}
    2 \langle g(Y^j) \Delta_h W^{j+1}, Y^{j+1} - Y^j \rangle \le | g(Y^j)
    \Delta_h W^{j+1}|^2 + |Y^{j+1} - Y^j|^2.
  \end{align*}
  Note that the second term also appears on the left hand side of the
  inequality \eqref{eq:ineq5}. After cancelling and some rearranging we
  therefore get 
  \begin{align}
    \label{eq:ineq_j}
    |Y^{j+1}|^2 + h | g(Y^{j+1})|^2  \le |Y^j|^2 + h |g(Y^j) |^2 + 2
    h L \big( 1  + |Y^{j+1}|^2 \big) + Z^{j+1},
  \end{align}
  where
  \begin{align}
    \label{eq:Z}
    Z^{j+1} := | g(Y^j) \Delta_h W^{j+1}|^2 - h | g(Y^j) |^2 + 2 \langle g(Y^j)
    \Delta_h W^{j+1}, Y^j \rangle.
  \end{align}
  By iterating the inequality we arrive at
  \begin{align*}
    |Y^n|^2 + h |g(Y^n)|^2 &\le |Y^0|^2 + h |g(Y^0)|^2 + 2 h L \sum_{j =
    0}^{n-1} \big(1 + |Y^{j+1}|^2 \big) + \sum_{j = 0}^{n-1} Z^{j+1},
  \end{align*}
  or, equivalently,
  \begin{align}
    \begin{split}
      \label{eq:ineq_n}
      &(1 - 2 h L)|Y^n|^2 + h |g(Y^n)|^2\\
      &\quad \le (1 - 2 h L) |Y^0|^2 + h |g(Y^0)|^2 + 
      2 L t_n  + \sum_{j = 0}^{n-1} Z^{j+1} 
      + 2 h L \sum_{j = 0}^{n-1} |Y^j|^2.
    \end{split}
  \end{align}
  Next, note that $1 \ge (1 - 2 h L) \ge (1 - 2 h_0 L) > 0$. From this we
  finally obtain the relationship
  \begin{align*}
    X_n \le F_n + M_n + \sum_{j = 0}^{n-1} G_j X_j, \quad \text{ for all } n
    \in \N_0, 
  \end{align*}
  where 
  \begin{align*}
    X_n &:= |Y^n|^2 + h |g(Y^n)|^2, \\
    F_n &:= |Y^0|^2 + (1 - 2 h_0 L)^{-1} \big( h_0 |g(Y^0)|^2 + 2 L t_n \big),\\
    M_n &:= (1 - 2 h_0 L)^{-1} \sum_{j = 0}^{n-1} Z^{j+1},\\
    G_n &:= (1 - 2 h_0 L)^{-1} 2 h L,
  \end{align*}
  for all $n \in \N_0$. Clearly, the processes $(X_n)_{n \in \N_0}$,
  $(F_n)_{n \in \N_0}$, and $(G_n)_{n \in \N_0}$ satisfy the assumptions of
  Theorem~\ref{th:gronwall}. Hence it remains to show that $(M_n)_{n \in \N_0}$
  is a martingale with respect to the filtration $(\F_{t_n})_{n \in \N_0}$.

  For this first note that $(M_n)_{n \in \N_0}$ is adapted and satisfies $M_0 =
  0$. Then, we show inductively that $M_n =(1 - 2 h_0 L)^{-1} \sum_{j =
  0}^{n-1} Z^{j+1}$ as well as the random variables $|Y^n|^2$, $|g(Y^n)|^2$ are
  integrable: For $n = 0$ this is evident. Assume now that $|Y^{j}|^2$,
  $|g(Y^{j})|^2$ are 
  integrable for all $0 \le j < n$. Then, from \eqref{eq:Z}, the
  Cauchy-Schwarz and the Young inequality, we obtain the estimate
  \begin{align*}
    \E\big[ |Z^{n}| \big] \le 2 \E\big[ | g(Y^{n-1}) \Delta_h W^{n}|^2 \big]
    + h \E \big[ |g(Y^{n-1})|^2 \big] + \E \big[ |Y^{n-1}|^2 \big].
  \end{align*}
  The first term is bounded by the It\=o isometry by
  \begin{align*}
    2 \E\big[ | g(Y^{n-1}) \Delta_h W^{n}|^2 \big] = 2 h \E\big[ | g(Y^{n-1})
    |^2 
    \big],
  \end{align*}
  since $Y^{n-1}$ is independent of $\Delta_h W^{n}$. The latter two terms are
  bounded by the induction hypothesis. Altogether, this shows that
  $Z^{n}$ and, hence, $M_{n}$ are integrable random variables. Further, we have 
  \begin{align*}
    \E \big[ Z^{n} \big] = 0.
  \end{align*}
  Thus, taking expectation in \eqref{eq:ineq_n} yields that $|Y^n|^2$,
  $|g(Y^n)|^2$ are also integrable.

  Finally, as above we get
  \begin{align*}
    \E \big[ Z^{n} | \F_{t_{n-1}} \big] = 0,
  \end{align*}
  which proves the martingale property for $(M_n)_{n \in \N_0}$. 
  Theorem~\ref{th:gronwall} is therefore applicable and yields the assertion
  (together with the inequality $1 + x \le \ee^x$).
\end{proof}

\section*{Acknowledgement}

The first author gratefully acknowledges financial support by the research
center \textsc{Matheon}.

%\bibliographystyle{plain}
%\bibliography{../lit}

\end{document}